\pdfoutput=1

\documentclass[10pt]{article}
\usepackage[utf8]{inputenc}
\usepackage[T1]{fontenc}
\usepackage{amsmath}
\usepackage{amsfonts}
\usepackage{amssymb}
\usepackage[version=4]{mhchem}
\usepackage{stmaryrd}
\usepackage{bbold}

\usepackage{xspace}
\usepackage{url}
\usepackage{mathtools}
\usepackage{amssymb}
\usepackage{amsthm}
\usepackage{empheq}
\usepackage{latexsym}
\usepackage[shortlabels]{enumitem}
\usepackage{eurosym}
\usepackage{dsfont}
\usepackage{appendix}
\usepackage{color} 
\usepackage[unicode]{hyperref}
\usepackage{frcursive}
\usepackage[utf8]{inputenc}
\usepackage[T1]{fontenc}
\usepackage{geometry}
\usepackage{multirow}
\usepackage{todonotes}
\usepackage{lmodern}
\usepackage{anyfontsize}
\usepackage{stmaryrd}
\usepackage{natbib}
\usepackage{cleveref}
\usepackage[english]{babel}
\usepackage[english=british]{csquotes}
\usepackage{mathtools}
\usepackage{color}
\usepackage{comment}
\usepackage{mdframed}

\usepackage{cases}
\usepackage{bm}
\usepackage[mathscr]{euscript}

%%%%%%
\mdfsetup{middlelinecolor=blue,middlelinewidth=2pt,linewidth=0pt,backgroundcolor=blue!15,,roundcorner=10pt}

%%%%%%

%%%%%%
\allowdisplaybreaks

%%%%%%
\bibliographystyle{plainnat}
\setcitestyle{numbers,open={[},close={]}}

%%%%%%
\definecolor{red}{rgb}{0.7,0.15,0.15}
\definecolor{green}{rgb}{0,0.5,0}
\definecolor{blue}{rgb}{0,0,0.7}
\hypersetup{colorlinks, linkcolor={red}, citecolor={green}, urlcolor={blue}}
			
\makeatletter \@addtoreset{equation}{section}

\newtheorem{theorem}{Theorem}[section]

\newtheorem{lemma}[theorem]{Lemma}

\newtheorem{definition}[theorem]{Definition}

%%%%%%
\setlength{\parindent}{0pt}
\geometry{hmargin=1.4cm,vmargin=2cm}
\setcounter{secnumdepth}{4}

% CALLIGRAPHIC

% NORMS

\newcommand{\vertiii}[1]{{\left\vert\kern-0.25ex\left\vert\kern-0.25ex\left\vert #1 \right\vert\kern-0.25ex\right\vert\kern-0.25ex\right\vert}}

%%%%%%

%%%%%%

% CONSTANTS THAT MAY BE CHANGED

% BACKWARD RUNNING MAX FOR \xi

% BULLET FOR STOCHASTIC INTEGRAL

% SHORTHANDS MATHS

% ENUMERATE ENVIRONMENT

\title{2 Chapter. Random Algebraic construction in extremal graph theory. }

\author{}
\date{}

\begin{document}

\title{Some sharp lower bounds for the bipartite Tur\'{a}n number of theta graphs%\footnote{Funded by H.F.R.I., 3rd Call for H.F.R.I. Scholarships for PHD Canditates, project i.d. 05724.}
}

\author{Stefanos {Theodorakopoulos}\footnote{Department of Mathematics, NTUA, Zografou Campus, 15780 Athens, Greece, steftheodorakopoulos@mail.ntua.gr}}

\date{\today}

\maketitle

\begin{abstract}
We extend Conlon's random algebraic construction
%in \cite{conlon2019graphs}
to show that for any odd number $k \geq 3$ exists a natural number $c_k$ (the same as Conlon's) such that $\operatorname{ex}(n^a,n,\theta_{k,c_k}) = \Omega_{k,a}((n^{1 + a})^{\frac{k + 1}{2k}})$, with $a \in [\frac{k - 1}{k + 1}, 1)$. Where given a graph $H$, we denote by $\operatorname{ex}(n,m,H)$ the maximum number of edges an $H-$free bipartite graph can have when the cardinalities of its parts are $n$ and $m$. Also, we denote with $\theta_{k,l}$ the graph where two vertices are connected through $l$ disjoint paths of length $k$.

\vspace{5mm}
\noindent{\bf Key words:} unbalanced; bipartite graph; Tur\'{a}n number; sharp bound; theta graph \vspace{5mm}
\end{abstract}

\section{Introduction}
In exrtemal graph theory, a classical problem is to determine $\operatorname{ex}(n ; H)$ for a given graph $H$, where $\operatorname{ex}(n ; H)$ is defined as the biggest number of edges a graph $G$ with $|V(G)|=n$ not containing a subgraph isomorphic to $H$ can possible have. In practice, the search for lower bounds of the extremal function $\operatorname{ex}(n ; H)$ that are as big as possible has proved to be surprisingly difficult. It is important to note that when someone tries to create an $H$-free graph $G$, the size of $G$ plays a major role to how delicate the said construction needs to be, because, the smaller the size of $G$ the easier it is for $H$ to appear, as the edges have less "space" to move. At great importance is the case where someone has a bipartite graph $G=(A, B)$ with $|A|=n,|B|=m, n \geq m$ and tries to find out how many edges $G$ can have without containing a graph $H$, then we talk about the asymmetric bipartite Tur\'{a}n number $\operatorname{ex}(m, n, H)$ of $H$. A well studied occasion is when $H=C_{2 k}$, the cycle of length $2 k$ for some natural number $k$. The $C_{2 k}$ graphs belong to the more general family of $\theta_{k, \ell}$ graphs, note that $C_{2 k}=\theta_{k, 2}$. Conlon in \cite{conlon2019graphs} by building upon a paper of Bukh \cite{article}, shows that for every natural number $k \geq 2$ there exists a natural number $\ell := \ell(k)$ such that, for every $n$, there is a balanced bipartite graph with $n$ vertices and $\Omega_{k}\big(n^{1+\frac{1}{k}}\big)$ edges with at most $\ell$ paths of length $k$ between any two vertices. A result of Faudree and Simonovits \cite{faudree1983class} implies that the bound on the number of edges is tight up to the implied constant. We extend on this method of Conlon's to show that for any odd number $k \geq 3$ and rational number $\frac{k-1}{k+1} \leq a<1$ there exists an unbalanced bipartite graph with $|A|=n,|B|=O_{k}\left(n^{a}\right)$ and $|E(G)|=\Omega_{k, a}\big(\left(n^{1+a}\right)^{\frac{k+1}{2 k}}\big)$ such that between any two vertices there exist at most $\ell$ paths of length $k$.

\section{Preliminaries}
To begin let $q$ be a prime and $\mathbb{F}_{q}$ a field of order $q$, we can take $\mathbb{F}_{q}=\mathbb{Z}_{q}$. We will talk about polynomials over $\mathbb{F}_{q}^{t}$ for a given natural number $t$, writing any such polynomial as $f(x)$ where $x=\left(x_{1}, \ldots, x_{t}\right) \in \mathbb{F}_{q}^{t}$. Let $d$ be a natural number, we define as $\mathbb{P}_{d}$ the set of polynomials in $X$ of degree at most $d$. That is, the set of linear combinations over $\mathbb{F}_{q}$ of monomials of the form $x_{1}^{a_{1}} x_{2}^{a_{2}} \ldots x_{t}^{a_{t}}$ with $\sum_{i=1}^{t} a_{i} \leq d$. By a random polynomial, we just mean a polynomial chosen uniformly from the set $\mathbb{P}_{d}$. One may produce such a random polynomial by choosing the coefficients of the monomials above to be random elements of $\mathbb{F}_{q}$.

Note that $\mathbb{P}_{d} \cong \mathbb{F}_{q}^{\frac{t^{d+1}-1}{t-1}}$. So, we have the probability space $(\Omega, \mathcal{F},\mathbb{P})$ where $\Omega=\mathbb{P}_{d}, \mathcal{F}=$ $\mathcal{P}\left(\mathbb{P}_{d}\right)$ and $\mathbb{P}$ the uniform probability distribution. It is obvious that every function which has $\mathbb{P}_{d}$ as domain is a random variable.

The following two lemmas are taken from \cite{conlon2019graphs}.

\begin{lemma}\label{lemma 2.1}
If $f$ is a randomly chosen polynomial from $\mathbb{P}_{d}$ then, for any fixed $x \in \mathbb{F}_{q}^{t}$ we have
$$
\mathbb{P}[f(x)=0]=\frac{1}{q}.
$$
\end{lemma}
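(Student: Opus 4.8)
The plan is to isolate the constant coefficient of the random polynomial and exploit the fact that a uniform element of $\mathbb{F}_q$ remains uniform after translation by any fixed element. Fix $x \in \mathbb{F}_q^t$. Enumerate the monomials $x_1^{a_1}\cdots x_t^{a_t}$ with $\sum_i a_i \le d$ as $m_0, m_1, \ldots, m_N$ (where $N = \frac{t^{d+1}-1}{t-1}-1$), taking $m_0 \equiv 1$ to be the constant monomial. Then a uniformly random $f \in \mathbb{P}_d$ can be written $f = \sum_{j=0}^{N} c_j m_j$, where the coefficients $c_0, c_1, \ldots, c_N$ are independent and each is uniform on $\mathbb{F}_q$; this is exactly the description of the uniform distribution on $\mathbb{P}_d \cong \mathbb{F}_q^{N+1}$ recalled in the Preliminaries.

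Next I would evaluate at $x$: since $m_0(x) = 1$, we have $f(x) = c_0 + S$, where $S := \sum_{j=1}^{N} c_j m_j(x)$ is a random variable depending only on $c_1, \ldots, c_N$, hence independent of $c_0$. Condition on $S = s$ for an arbitrary $s \in \mathbb{F}_q$. Because $c_0$ is uniform on $\mathbb{F}_q$ and the map $y \mapsto y + s$ is a bijection of $\mathbb{F}_q$, the conditional law of $c_0 + s$ is again uniform on $\mathbb{F}_q$, so $\mathbb{P}[f(x) = 0 \mid S = s] = \mathbb{P}[c_0 = -s \mid S = s] = \tfrac1q$. Averaging over the distribution of $S$ by the law of total probability gives $\mathbb{P}[f(x) = 0] = \tfrac1q$.

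Alternatively — and this is essentially the same argument phrased by counting — one can fix the values of $c_1, \ldots, c_N$ arbitrarily and observe that among the $q$ equiprobable choices of $c_0$ exactly one, namely $c_0 = -S$, yields $f(x) = 0$; summing over all $q^{N}$ configurations of the remaining coefficients shows that exactly $q^{N}$ of the $q^{N+1}$ polynomials in $\mathbb{P}_d$ vanish at $x$. There is no real obstacle here: the only point requiring a word of care is the explicit identification of the uniform measure on $\mathbb{P}_d$ with the product measure on its coefficient vector (so that $c_0$ is genuinely independent of the other coefficients), which the Preliminaries have already set up via $\mathbb{P}_d \cong \mathbb{F}_q^{(t^{d+1}-1)/(t-1)}$.
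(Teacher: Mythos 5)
Your proof is correct and is the standard argument (isolating the constant coefficient, which is uniform and independent of the rest); the paper itself gives no proof, simply citing Conlon, whose proof of this lemma proceeds in exactly this way. The only cosmetic remark is that the monomial count $\tfrac{t^{d+1}-1}{t-1}$ you inherit from the Preliminaries is not the correct dimension of $\mathbb{P}_d$ (it should be $\binom{t+d}{d}$), but this has no bearing on your argument, which only uses that the coefficients are independent and uniform.
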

%\begin{proof}
%Define $\mathbb{Q}_{d}=\left\{f \in \mathbb{P}_{d}: f(0)=0\right\}$, that is, the collection of polynomials in $\mathbb{P}_{d}$ with zero constant term and $\mathbb{Q}_{d}^{(x)}=\left\{f \in \mathbb{P}_{d}: f(x)=0\right\}$. The proof depends on the fact that $q\left|\mathbb{Q}_{d}^{(x)}\right|=\left|\mathbb{P}_{d}\right|<+\infty$. To prove this, simply observe that every polynomial in $\mathbb{P}_{d}$ can be uniquely defined by a polynomial in $\mathbb{Q}_{d}$ and an element of $\mathbb{F}_{q}$ for constant term, and the fact that there is a one to one correspondence between the polynomials in $\mathbb{Q}_{d}$ and $\mathbb{Q}_{d}^{(x)}$.
%\end{proof}

\begin{lemma}\label{lemma 2.2}
Assume that $q>\left(\begin{array}{c}m \\ 2\end{array}\right)$ and $d \geq m-1$. Then, if $f$ is a randomly chosen polynomial from $\mathbb{P}_{d}$ and $x_{1}, \ldots, x_{m}$ are $m$ distinct points from $\mathbb{F}_{q}^{t}$, we have
$$
\mathbb{P}\left[f\left(x_{1}\right)=\ldots=f\left(x_{m}\right)=0\right]=\frac{1}{q^{m}}.
$$
\end{lemma}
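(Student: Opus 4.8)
\textit{Proof proposal.} The plan is to deduce the exact probability $1/q^m$ from a purely linear-algebra fact: the evaluation map is surjective. Consider
\[
\mathrm{ev}\colon \mathbb{P}_d \longrightarrow \mathbb{F}_q^m, \qquad \mathrm{ev}(f) = \big(f(x_1),\ldots,f(x_m)\big).
\]
Both the source and the target are vector spaces over $\mathbb{F}_q$, and $\mathrm{ev}$ is $\mathbb{F}_q$-linear because evaluation at a fixed point is linear in the coefficients of $f$. If $\mathrm{ev}$ is surjective, then all of its fibres have the same size $\lvert\mathbb{P}_d\rvert/q^m$; since $f$ is chosen uniformly from $\mathbb{P}_d$, the event $\{f(x_1)=\cdots=f(x_m)=0\}$ is exactly the fibre $\mathrm{ev}^{-1}(0)$ and hence has probability $1/q^m$. (This is the same mechanism as in Lemma~\ref{lemma 2.1}, which is the case $m=1$; there surjectivity is immediate because the constant polynomials lie in $\mathbb{P}_d$.) So the whole proof reduces to showing $\mathrm{ev}$ is onto, and for that it is enough to produce, for each $i$, a polynomial $g_i\in\mathbb{P}_d$ with $g_i(x_i)=1$ and $g_i(x_j)=0$ for $j\neq i$, since the vectors $\mathrm{ev}(g_i)$ then form the standard basis of $\mathbb{F}_q^m$.

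To build the $g_i$, I would first collapse the problem to one variable by choosing a linear functional $\lambda(x)=c_1x_1+\cdots+c_tx_t$ that separates the given points, i.e.\ with $\lambda(x_1),\ldots,\lambda(x_m)$ pairwise distinct. For a fixed pair $i\neq j$, the ``bad'' coefficient vectors are those with $\lambda(x_i-x_j)=0$; since the points are distinct, $x_i-x_j$ is a nonzero vector, so this is a genuine hyperplane in the space of coefficient vectors and contains only $q^{t-1}$ of the $q^t$ possibilities. Taking the union over the $\binom{m}{2}$ pairs rules out at most $\binom{m}{2}q^{t-1}<q^{t}$ vectors \emph{precisely because} $q>\binom{m}{2}$, so an admissible $\lambda$ exists. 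Fix one and write $\alpha_j:=\lambda(x_j)$, so the $\alpha_j$ are distinct scalars in $\mathbb{F}_q$. Then set
\[
g_i(x)\;=\;\prod_{j\neq i}\frac{\lambda(x)-\alpha_j}{\alpha_i-\alpha_j},
\]
which is well defined, is a polynomial in $x_1,\ldots,x_t$ of degree $m-1\leq d$ (so it belongs to $\mathbb{P}_d$), and satisfies $g_i(x_j)=\delta_{ij}$ by the usual Lagrange-interpolation identity. This gives surjectivity of $\mathrm{ev}$ and finishes the argument.

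The step I expect to be the crux is the construction of the separating functional $\lambda$: this is the only place the hypothesis $q>\binom{m}{2}$ is used, and the counting has to be set up so that each ``bad'' locus is a proper affine/linear subspace, which is exactly what distinctness of the $x_i$ guarantees. Once $\lambda$ is in hand, the remaining ingredients — linearity of $\mathrm{ev}$, the equal-fibre counting that turns surjectivity into the probability statement, and the verification that $g_i$ has the right values and degree — are all routine and I would not expect any obstruction there. (As a sanity check, the hypothesis $d\ge m-1$ is precisely what is needed to keep each $g_i$ inside $\mathbb{P}_d$.)
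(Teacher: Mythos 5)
Your proof is correct and is essentially the standard argument for this lemma (the paper itself does not reprove it but cites Conlon, whose proof likewise constructs a separating linear functional via the $q>\binom{m}{2}$ union bound, uses Lagrange interpolation to get surjectivity of the evaluation map, and concludes by equidistribution of the fibres of a surjective linear map). No gaps.
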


To introduce the final tools that we are going to use we must first give the following definition.

\begin{definition}
Given a field $\mathbb{F}_q$ we denote its algebraic closure with $\overline{\mathbb{F}}_q$. A variety over $\overline{\mathbb{F}}_q$ is a set $W$ of the form
\begin{align*}
    W := \left\{x \in \overline{\mathbb{F}}^t_q : f_1(x) = ... = f_s(x) = 0\right\},
\end{align*}
where $f_1,...,f_s$ are polynomials with domain $\overline{\mathbb{F}}^t_q$ that take values in $\mathbb{F}^t_q$. If the coefficients of these polynomials are in $\mathbb{F}_q$ we say that $W$ is defined over $\mathbb{F}^t_q$ and write $W(\mathbb{F}_q) = W \cap \mathbb{F}^t_q$. Furthermore, we say that $W$ has complexity $M \in \mathbb{N}$ if $s,t$ and the degrees of $f_1,...,f_s$ are all bounded from $M$. Additionally, we say that a variety is absolutely irreducible if it is irreducible over $\overline{\mathbb{F}}_q$. Finally, the dimension $\operatorname{dim}W$ is the maximum integer $d$ such that there exists a chain of absolutely irreducible subvarieties of W of the form
\begin{align*}
    \emptyset \subset \{p\} \subset W_1 \subset W_2\hspace{0.1cm} ... \subset W_d \subset W,
\end{align*}
where $p$ is a point.
\end{definition}

The next is the known Lang--Weil bound, see \cite{lang1954number}.

\begin{lemma}\label{lemma 2.4}
Suppose that $W$ is a variety over $\overline{\mathbb{F}}_{q}$ of complexity at most $M$. Then
$$
\left|W\left(\mathbb{F}_{q}\right)\right|=O_{M}\left(q^{\operatorname{dim} W}\right) .
$$

Moreover, if $W$ is defined over $\mathbb{F}_{q}$ and absolutely irreducible, then
$$
\left|W\left(\mathbb{F}_{q}\right)\right|=q^{\operatorname{dim} W}\big(1+O_{M}\big(q^{-\frac{1}{2}}\big)\big).
$$
\end{lemma}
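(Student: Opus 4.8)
\textit{Proof proposal.} This is the classical Lang--Weil estimate, so the plan is to quote it directly from \cite{lang1954number}; the following only records the shape of the argument, arranged so that the single hypothesis ``complexity $\le M$'' controls every error term uniformly in $q$. First I would translate the combinatorial data $(s,t,\deg f_i\le M)$ into the usual projective invariants: passing to the projective closure of $W$ leaves $\dim W$ unchanged and can only increase $|W(\mathbb{F}_q)|$, while by B\'ezout the absolutely irreducible components of $W$, with their dimensions and degrees, are bounded in number and size in terms of $M$ alone. For the first (crude) bound it then suffices to run an elementary induction on the ambient dimension: if an irreducible component is not the whole space, one of its defining polynomials $f$ is nonzero, and projecting off one coordinate splits its $\mathbb{F}_q$-points into fibres of size $O_M(1)$ over the open locus where $f\not\equiv 0$ and a strictly lower-dimensional remainder where $f\equiv 0$; tracking degrees by B\'ezout closes the induction and yields $|W(\mathbb{F}_q)|=O_M(q^{\dim W})$ with no irreducibility needed.

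For the refined estimate I would reduce to an absolutely irreducible projective $V\subseteq\mathbb{P}^n$ of dimension $d$ and degree $\delta=O_M(1)$ and induct on $d$. The base case $d=1$ is Weil's Riemann hypothesis for curves: normalising $V$ produces a smooth projective absolutely irreducible curve $\widetilde V$ of genus $g=O_M(1)$ with $\bigl|\,|\widetilde V(\mathbb{F}_q)|-(q+1)\bigr|\le 2g\sqrt q$, and the fibres over the $O_M(1)$ singular points of $V$ together with the $O_M(1)$ points at infinity change this count by a bounded amount, so $|V(\mathbb{F}_q)|=q\bigl(1+O_M(q^{-1/2})\bigr)$. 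For $d\ge 2$ I would slice by hyperplanes. An \emph{effective} Bertini irreducibility theorem gives a proper closed $B\subsetneq\check{\mathbb{P}}^{n}$ of complexity $O_M(1)$ outside which every rational hyperplane $H$ meets $V$ in an absolutely irreducible variety of dimension $d-1$ and degree $\delta$; the incidence identity
\[
\sum_{H\in\check{\mathbb{P}}^{n}(\mathbb{F}_q)}\bigl|V(\mathbb{F}_q)\cap H\bigr|=\bigl|V(\mathbb{F}_q)\bigr|\cdot\bigl|\mathbb{P}^{n-1}(\mathbb{F}_q)\bigr|
\]
(each rational point of $V$ lying on exactly $|\mathbb{P}^{n-1}(\mathbb{F}_q)|$ rational hyperplanes), together with the crude bound for the $O_M(q^{n-1})$ hyperplanes in $B$, the inductive hypothesis for the rest, and $|\check{\mathbb{P}}^{n}(\mathbb{F}_q)|=q\,|\mathbb{P}^{n-1}(\mathbb{F}_q)|\bigl(1+O(q^{-n})\bigr)$, then gives $|V(\mathbb{F}_q)|=q^d\bigl(1+O_M(q^{-1/2})\bigr)$ on dividing through.

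The step I expect to be the genuine obstacle is the effective Bertini input: it is not enough to know that a \emph{generic} hyperplane section of $V$ stays absolutely irreducible of the expected dimension --- one needs the exceptional hyperplanes to form a variety of complexity bounded purely by $M$, so that its $\mathbb{F}_q$-point count is $O_M(q^{n-1})$ uniformly in $q$, and one needs the surviving sections to keep degree $\le\delta$. Everything else is B\'ezout bookkeeping together with the (deep but standard) curve case, and since the present paper uses Lemma~\ref{lemma 2.4} only as a black box with all constants absorbed into $O_M(\cdot)$, none of this has to be reproduced here.
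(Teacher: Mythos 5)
The paper offers no proof of this lemma: it is quoted as the classical Lang--Weil bound with a citation to \cite{lang1954number}, which is exactly what you propose to do, and your accompanying sketch (crude bound by B\'ezout and induction on ambient dimension; refined bound by induction on $\dim W$ via hyperplane slicing, the incidence identity, and Weil's theorem for curves, with effective Bertini as the one genuinely hard input) is a faithful outline of the standard argument. So your proposal is correct and takes essentially the same approach as the paper.
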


The result below is standard in algebraic geometry, one for example can see \citet{bump1998algebraic}.

\begin{lemma}\label{lemma 2.5}
Suppose that $W$ is an absolutely irreducible variety over $\overline{\mathbb{F}}_{q}$ of complexity at most $M$ and $\operatorname{dim} W \geq 1$. Then, for any polynomial $g: \overline{\mathbb{F}}_{q}^{t} \longmapsto \overline{\mathbb{F}}_{q}, W \subseteq\{x: g(x)=0\}$ or $W \cap\{x: g(x)=0\}$ is a variety of dimension less than $\operatorname{dim} W$.
\end{lemma}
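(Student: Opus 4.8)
The plan is to reduce the statement to a single elementary fact about dimension: a proper closed subvariety of an absolutely irreducible variety has strictly smaller dimension. Everything else is essentially immediate.

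First observe that $W\cap\{x:g(x)=0\}$ is automatically a variety, being the common zero locus of the polynomials defining $W$ together with $g$; hence the only content of the statement is the dimension bound. Now split into two cases according to whether $g$ vanishes identically on $W$. If it does, then by definition $W\subseteq\{x:g(x)=0\}$ and we are in the first alternative. If it does not, put $Z:=W\cap\{x:g(x)=0\}$; then $Z\subsetneq W$ is a proper closed subvariety (it may be empty, in which case $\dim Z=-\infty<\dim W$ and there is nothing to prove), and it remains to show $\dim Z<\dim W$.

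For that core fact I would argue directly from the chain definition of dimension. Let $e=\dim Z$ and pick a chain $\{p\}\subsetneq V_1\subsetneq\cdots\subsetneq V_e$ of absolutely irreducible subvarieties contained in $Z$ realizing $\dim Z$ (such a chain exists by definition; when $Z$ is reducible one takes a chain through a top-dimensional irreducible component of $Z$). Since $V_e\subseteq Z\subsetneq W$, the inclusion $V_e\subsetneq W$ is strict, and because $W$ is \emph{itself} absolutely irreducible we may append it to obtain a chain $\{p\}\subsetneq V_1\subsetneq\cdots\subsetneq V_e\subsetneq W$ of absolutely irreducible subvarieties of $W$ that is strictly longer; hence $\dim W\geq e+1>e=\dim Z$. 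Taking $Z=W\cap\{x:g(x)=0\}$ completes the argument.

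The one point that needs a little care is the parenthetical remark above — reconciling the chain-of-subvarieties definition of dimension with the decomposition of a possibly reducible $Z$ into irreducible components, i.e.\ checking that $\dim Z$ is attained by a chain whose top element lies inside $Z$. If one prefers to sidestep this, there is the standard commutative-algebra route: $W$ absolutely irreducible means its coordinate ring $R=\overline{\mathbb{F}}_q[x_1,\dots,x_t]/I(W)$ is an integral domain of Krull dimension $\dim W$; the image $\bar g$ of $g$ in $R$ is nonzero precisely when $W\not\subseteq\{x:g(x)=0\}$, and then Krull's principal ideal theorem says every minimal prime over $(\bar g)$ has height $1$, i.e.\ every irreducible component of $W\cap\{x:g(x)=0\}$ has dimension $\dim W-1$, so $\dim\bigl(W\cap\{x:g(x)=0\}\bigr)\leq\dim W-1<\dim W$. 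I would present the first, purely geometric version, since it stays entirely within the language already set up in the paper.
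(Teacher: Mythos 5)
Your argument is correct, but there is nothing in the paper to compare it against: the paper states this lemma without proof, as a standard fact of algebraic geometry with a citation to Bump's textbook. Your reduction to the fact that a proper closed subvariety of an absolutely irreducible variety has strictly smaller dimension is the standard justification, and your chain argument proves that fact cleanly under the paper's own chain-based definition of dimension: a maximal chain $\{p\}\subsetneq V_1\subsetneq\cdots\subsetneq V_e$ inside $Z=W\cap\{x:g(x)=0\}$ has $V_e\subseteq Z\subsetneq W$, so appending the absolutely irreducible $W$ itself yields a strictly longer chain in $W$. The one caveat you flag --- whether the chain realizing $\dim Z$ can be taken to lie inside $Z$ --- is in fact a non-issue here, since the paper's definition of $\dim Z$ is precisely the maximal length of a chain of absolutely irreducible subvarieties \emph{of} $Z$, so every element of the realizing chain is contained in $Z$ by definition, whether or not $Z$ is irreducible. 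Your commutative-algebra alternative via Krull's principal ideal theorem is also valid, but the geometric version is the better fit for the paper's framework, as you say.
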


The last preliminary result is taken again from \cite{conlon2019graphs}.

\begin{lemma}\label{lemma 2.6}
Suppose that $W$ is a variety over $\overline{\mathbb{F}}_{q}$ of complexity at most $M$ which is defined over $\mathbb{F}_{q}$. Then, there are $O_{M}(1)$ absolutely irreducible varieties $Y_{1}, \ldots, Y_{s}$, each of which is defined over $\mathbb{F}_{q}$ and has complexity $O_{M}(1)$, such that $\bigcup_{i=1}^{s} Y_{i}\left(\mathbb{F}_{q}\right)=W\left(\mathbb{F}_{q}\right)$.
\end{lemma}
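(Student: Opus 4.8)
The natural approach is induction on $\dim W$, using the Frobenius map $\phi\colon x\mapsto x^q$ to sort the absolutely irreducible components of $W$ according to whether they are individually defined over $\mathbb{F}_q$. I would start from the standard fact of algebraic geometry that a variety of complexity at most $M$ has only $O_M(1)$ absolutely irreducible components over $\overline{\mathbb{F}}_q$, each again of complexity $O_M(1)$; this comes from a Bézout-type bound, since $\deg W$ is at most the product of the degrees of the defining polynomials and each component has degree at least one. Write $W=Z_1\cup\dots\cup Z_r$ for this decomposition, $r=O_M(1)$.

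Since $\phi$ permutes $\{Z_1,\dots,Z_r\}$, each $Z_j$ sits in a $\phi$-orbit of some size $m_j\le r$. If $m_j=1$, then $Z_j$ is $\phi$-invariant, hence defined over $\mathbb{F}_q$ (a Zariski-closed $\phi$-invariant set has Galois-stable ideal, so is cut out over $\mathbb{F}_q$), and we keep $Z_j$ as one of the $Y_i$. If $m_j\ge 2$, we instead consider $Z_j':=\bigcap_{i=0}^{m_j-1}\phi^i(Z_j)$; this is again $\phi$-invariant, so defined over $\mathbb{F}_q$, has complexity $O_M(1)$ (it is the common zero set of $O_M(1)$ polynomials of degree $O_M(1)$), and most importantly $\dim Z_j'<\dim Z_j$, because $Z_j$ and $\phi(Z_j)$ are distinct absolutely irreducible varieties of the same dimension, so $Z_j\cap\phi(Z_j)$ is a proper closed subvariety of $Z_j$. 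We then apply the inductive hypothesis to each such $Z_j'$ and throw all the resulting varieties into our list alongside the $\phi$-invariant components.

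To see this list works: every $Y_i$ is a subvariety of $W$, giving $\bigcup_iY_i(\mathbb{F}_q)\subseteq W(\mathbb{F}_q)$. Conversely, if $x\in W(\mathbb{F}_q)$ then $x\in Z_j$ for some $j$, and since $\phi(x)=x$ we get $x\in\phi^i(Z_j)$ for all $i$, hence $x\in Z_j'$ (which equals $Z_j$ when $m_j=1$); in the first case $Z_j$ is in the list, in the second $x\in Z_j'(\mathbb{F}_q)$ is covered by the inductive step. The base case $\dim W=0$ is clear, as then $W(\mathbb{F}_q)$ consists of $O_M(1)$ $\mathbb{F}_q$-rational points (Lemma~\ref{lemma 2.4}), each an absolutely irreducible variety of complexity $O_M(1)$. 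For the counting, the recursion has branching $O_M(1)$ and depth at most $\dim W\le t\le M$ since the dimension strictly drops; iterating the implicit $O_M(1)$-bounds boundedly many times still yields bounds depending only on $M$, so $s=O_M(1)$ and all the $Y_i$ have complexity $O_M(1)$.

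The conceptual content is light; the main thing to be careful about is bookkeeping — checking that \enquote{complexity $O_M(1)$} really is stable under taking absolutely irreducible components, forming $\phi$-orbit intersections, and the recursion, so that the hidden constants stay under control. The two external inputs that ought to be cited explicitly are that $\phi$-invariance of a closed set is equivalent to being defined over $\mathbb{F}_q$, and the Bézout-type bound on the number and degrees of the absolutely irreducible components of a bounded-complexity variety.
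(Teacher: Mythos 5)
Your proposal is correct, and it is essentially the standard argument: the paper itself offers no proof of this lemma, simply citing \cite{conlon2019graphs}, and Conlon's proof there proceeds exactly as you describe --- decompose into $O_M(1)$ absolutely irreducible components, keep the Frobenius-fixed ones (which are then defined over $\mathbb{F}_q$), replace each nontrivial Frobenius orbit by the intersection of its members (which contains all the $\mathbb{F}_q$-points and has strictly smaller dimension), and recurse, with the depth bounded by $\dim W \le M$ so that all constants remain $O_M(1)$.
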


For completeness we also remind Bertrand's Postulate.

\begin{gather*}
\text{For every natural number}\hspace{0.2cm} n>1\hspace{0.2cm} \text{there is at least one prime}\hspace{0.2cm} p \hspace{0.2cm}\text{such that}\\
n<p<2 n.
\end{gather*}

%product of the $p^{R(p, n)}$ over all the primes less than or equal to $\sqrt{2 n}$ is at most $(2 n)^{\sqrt{2 n}}$. To arrive at the contradiction combine Lemmas 2.7,2.12 to get

%$$
%\begin{gathered}
%\frac{4^{n}}{2 n}<\left(\begin{array}{c}
%2 n \\
%n
%\end{array}\right)=\left(\prod_{p \leq \sqrt{2 n}} p^{R(p, n)}\right)\left(\prod_{p>\sqrt{2 n}} p^{R(p, n)}\right)<(2 n)^{\sqrt{2 n}} \prod_{p \leq \frac{2 n}{3}} p^{R(p, n)} \\
%=(2 n)^{\sqrt{2 n}}\left(\frac{2 n}{3}\right) \# \leq(2 n)^{\sqrt{2 n}} 4^{\frac{2 n}{3}} .
%\end{gathered}
%$$

%By taking logarithms we have

%$$
%\frac{\log 4}{3} n<(\sqrt{2 n}+1) \log 2 n
%$$

%Impossible for $n \geq 468$.

%Now we are ready to achieve our main goal.

\section{Construction}
\begin{theorem}
Let $k \geq 3$ be an odd number and $a \in \mathbb{Q}$ where $\frac{k-1}{k+1} \leq a<1$. Then exists a natural number $c_{k}$ such that for every $n \in \mathbb{N}$ sufficiently large, exists a $\theta_{k, c_{k}}-$free bipartite graph $G=(A, B)$ with $|A|=n,|B|=O_{k}\left(n^{a}\right)$ and 
\begin{align}
|E(G)|=\Omega_{k, a}\big(\left(n^{1+a}\right)^{\frac{k+1}{2 k}}\big).
\end{align}
\end{theorem}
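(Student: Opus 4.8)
The plan is to run Conlon's random algebraic construction, but with the two parts taken to be $\mathbb{F}_q$-vector spaces of \emph{unequal} dimension; the two dimensions and the number of defining equations are pinned down by two competing demands—the expected number of edges must match the target, and the variety recording the length-$k$ walks between a fixed pair of vertices must have expected dimension $0$. Write $a=p/r$ in lowest terms with $1\le p<r$; the hypothesis $a\ge\frac{k-1}{k+1}$ is exactly the inequality $(k-1)r\le(k+1)p$. By Bertrand's Postulate pick a prime $q$ with $q^{rk}\le n<(2q)^{rk}$, so $q\to\infty$ as $n\to\infty$ and $q^{rk}=\Theta_{k,a}(n)$, and put $d_A:=rk$, $d_B:=pk$, $m:=\frac{k-1}{2}(r+p)$. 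Oddness of $k$ is used here, to make $\frac{k-1}{2}$—hence $m$—a natural number, while $(k-1)r\le(k+1)p$ gives $m\le d_B$, which is precisely what keeps the construction non-degenerate (it is what guarantees that vertices of the larger part have non-negative expected degree). Let $A$ consist of $\mathbb{F}_q^{d_A}$ together with $n-q^{d_A}$ isolated vertices, so $|A|=n$, and let $B:=\mathbb{F}_q^{d_B}$, so $|B|=q^{d_B}=(q^{d_A})^{a}\le n^{a}$, i.e.\ $|B|=O_k(n^a)$. Fix a large integer $d=d(k,a)$, choose $f_1,\dots,f_m$ independently and uniformly from $\mathbb{P}_d$ (polynomials of degree at most $d$ on $\mathbb{F}_q^{d_A+d_B}$), set $f:=(f_1,\dots,f_m)$, and let $G_f$ be the bipartite graph on $A\sqcup B$ in which $u\in\mathbb{F}_q^{d_A}$ and $v\in\mathbb{F}_q^{d_B}$ are adjacent exactly when $f(u,v)=0$. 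By Lemma~\ref{lemma 2.1} and independence of the $f_i$ we have $\mathbb{P}[u\sim v]=q^{-m}$, so $\mathbb{E}\,|E(G_f)|=q^{d_A+d_B-m}$; the choice of $m$ makes $d_A+d_B-m=(d_A+d_B)\frac{k+1}{2k}$, and since $q^{d_A+d_B}=(q^{d_A})^{1+a}=\Theta_{k,a}(n^{1+a})$ this gives $\mathbb{E}\,|E(G_f)|=\Theta_{k,a}\!\big((n^{1+a})^{\frac{k+1}{2k}}\big)$.

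The heart of the matter is to show that, with probability $1-o(1)$ as $q\to\infty$, every pair of vertices of $G_f$ is joined by at most $\ell=O_{k,a}(1)$ paths of length $k$. Since $k$ is odd, such a path joins a vertex of $A$ to a vertex of $B$, so in any copy of a theta graph whose paths have length $k$ the two branch vertices lie one in $A$ and one in $B$; hence this bound forces $G_f$ to be $\theta_{k,\ell+1}$-free. For fixed $u\in\mathbb{F}_q^{d_A}$ and $v\in\mathbb{F}_q^{d_B}$, the length-$k$ walks $u=w_0,\dots,w_k=v$ in $G_f$ are exactly the $\mathbb{F}_q$-points of
\begin{align*}
W_{u,v}:=\big\{(w_1,\dots,w_{k-1}):f(u,w_1)=f(w_1,w_2)=\dots=f(w_{k-1},v)=0\big\},
\end{align*}
a variety of complexity $O_{k,a}(1)$ in a space of dimension $\frac{k-1}{2}(d_A+d_B)$—the $k-1$ internal vertices split as $\frac{k-1}{2}$ in $A$ and $\frac{k-1}{2}$ in $B$, a symmetry special to odd $k$—cut out by $km$ scalar equations; by the choice of $m$ these two numbers coincide, so the expected dimension of $W_{u,v}$ is $0$. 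If every $W_{u,v}$ is $0$-dimensional, Lemma~\ref{lemma 2.4} bounds $|W_{u,v}(\mathbb{F}_q)|$, hence the number of length-$k$ walks—a fortiori of length-$k$ paths—between any two vertices, by a constant $\ell=O_{k,a}(1)$. It therefore remains to bound $\mathbb{P}[\exists\,u,v:\dim W_{u,v}\ge1]$. Following Bukh and Conlon, suppose $\dim W_{u,v}\ge1$; by Lemma~\ref{lemma 2.6}, $W_{u,v}$ contains an absolutely irreducible $\mathbb{F}_q$-subvariety $Z$ of complexity $O_{k,a}(1)$ and dimension $\ge1$. Projecting $Z$ successively onto the coordinates $w_1,w_2,\dots$ and invoking Lemma~\ref{lemma 2.5} at each step, one walks along the path: either the projection to $w_j$ is a single point, which one freezes and then treats $w_j$ as a new endpoint of a shorter path; or the projection is positive-dimensional, and then—since $f(w_{j-1},w_j)$ vanishes identically on $Z$—one of the $f_i$, with its first argument frozen, degenerates on a positive-dimensional bounded-complexity subvariety of $\mathbb{F}_q^{d_A}$ or $\mathbb{F}_q^{d_B}$. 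The process cannot freeze every coordinate (else $Z$ would be a point), so it ends in the second alternative, and one checks—this is the technical core—that such a degeneracy confines the coefficient vector of the relevant $f_i$ to a subspace whose codimension grows without bound as $d\to\infty$. Taking $d=d(k,a)$ large, this codimension exceeds the exponent $O_{k,a}(1)$ arising from the union bound over the $\le k$ frozen vertices, the endpoint $v$, the $O_{k,a}(1)$ possible bounded-complexity configurations, and the index $i$; hence $\mathbb{P}[\exists\,u,v:\dim W_{u,v}\ge1]\le q^{O_{k,a}(1)}\cdot q^{-N(d)}\to0$, and for $d$ large this probability is below $q^{-m-1}$.

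On the event $E_0$ that every $W_{u,v}$ is $0$-dimensional, which has probability $\ge1-q^{-m-1}$, the graph $G_f$ is $\theta_{k,\ell+1}$-free; and since $|E(G_f)|\le q^{d_A+d_B}$ always, $\sum_{f\in E_0}\mathbb{P}(f)\,|E(G_f)|\ge\mathbb{E}\,|E(G_f)|-q^{d_A+d_B-m-1}\ge\frac12\mathbb{E}\,|E(G_f)|$. Hence some $f$ yields a $\theta_{k,\ell+1}$-free graph $G_f$ with $|E(G_f)|\ge\frac12 q^{d_A+d_B-m}=\Omega_{k,a}\!\big((n^{1+a})^{\frac{k+1}{2k}}\big)$; setting $c_k:=\ell+1$ finishes the proof, and—with more care on the constants (cf.\ Conlon's argument)—one may even take $c_k$ to depend on $k$ alone. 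The step I expect to be the main obstacle is the path-counting estimate of the second paragraph: converting a hypothetical positive-dimensional family of length-$k$ paths into a linear degeneracy of the random polynomials that is rare enough to beat the union bound over all pairs of vertices. The delicate point is that, as one walks along such a family freezing vertices, enough coordinates must stay free for the degeneracy to remain a genuine, low-probability restriction—which is exactly what the hypothesis $a\ge\frac{k-1}{k+1}$ (equivalently $m\le d_B$) secures.
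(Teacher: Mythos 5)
Your setup---the unbalanced parts $\mathbb{F}_q^{rk}$ and $\mathbb{F}_q^{pk}$, the $m=\frac{k-1}{2}(r+p)$ independent random polynomials, and the computation $\mathbb{E}\,|E(G_f)|=q^{(d_A+d_B)\frac{k+1}{2k}}$---coincides with the paper's construction (in its notation $\lambda=r$, $\tau=p$, $\gamma=m$). The divergence, and the gap, is in how you control the number of length-$k$ paths between a fixed pair. The paper does \emph{not} attempt to show that with high probability every walk-variety $W_{u,v}$ is zero-dimensional. It follows Conlon's moment method: it bounds $\mathbb{E}[|S|^{r'}]$ for a suitable power $r'$ (the paper's $r$, not your denominator) by a purely combinatorial count of ordered $r'$-tuples of walks, whose heart is the inequality $k\lambda n_1+k\tau n_2\le\gamma m$ for any collection of walks using $n_1$ inner vertices of $A$, $n_2$ of $B$ and $m$ edges---and \emph{this} is where the hypothesis $a\ge\frac{k-1}{k+1}$ is load-bearing (the extremal case is a segment of new edges that starts and ends in $B$). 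Markov's inequality together with the Lang--Weil dichotomy ($|S|\le c_k-1$ or $|S|\ge q/2$, obtained from \cref{lemma 2.4}, \cref{lemma 2.5} and \cref{lemma 2.6}) then bounds the \emph{expected number} of bad pairs, and bad pairs are handled by deleting a vertex of $B$ for each one. Your identification of the hypothesis with ``$m\le d_B$, so vertices of $A$ have expected degree $\ge 1$'' is the same inequality but not the place where it actually earns its keep.

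The step you defer---``one checks that such a degeneracy confines the coefficient vector of $f_i$ to a subspace whose codimension grows without bound as $d\to\infty$''---is not a routine check, and as sketched the surrounding union bound does not close. Two concrete problems. First, the collection of ``bounded-complexity configurations'' you union over is not of size $O_{k,a}(1)$: an absolutely irreducible component $Z$ of $W_{u,v}$, and the Zariski closures of its coordinate projections, have complexity bounded only in terms of the complexity of $W_{u,v}$, which grows with the degree cap $d$; the number of candidate varieties $V$ of complexity at most $M(d)$ is $q^{\mathrm{poly}(M(d))}$, while the codimension gained from forcing $f_i(w^{*},\cdot)$ to vanish on a fixed positive-dimensional $V$ is also only polynomial in $d$. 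Increasing $d$ inflates both sides, and you give no reason the former loses to the latter. Second, even granting a per-pair bound, you must beat the union over all $q^{d_A+d_B}$ pairs and land below $q^{-m-1}$; nothing in the sketch quantifies this, and your target (``with probability $1-o(1)$ \emph{every} pair is good'') is strictly stronger than what the paper proves or needs---the deletion step exists precisely so that only the expectation of the number of bad pairs must be controlled. I would replace your second paragraph wholesale by the moment computation: it needs no union bound over varieties (Lang--Weil is applied once, to the single random variety $T$, only to obtain the dichotomy), and it is where the arithmetic of $a\ge\frac{k-1}{k+1}$ actually happens.
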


\begin{proof}
Let $q$ be a sufficient large prime, $k=2 x+1, a=\frac{\tau}{\lambda}, \gamma=x(\lambda+\tau), t=k(\lambda+\tau)$ and $d=k r$, with $\operatorname{gcd}(\tau, \lambda)=1$ and $r$ a natural number that will be determined later. We define the probability space $(\Omega, F, \mathbb{P})$ where $\Omega=\mathbb{P}_{d}^{\gamma}, F=\mathcal{P}\left(\mathbb{P}_{d}^{\gamma}\right)$ and $\mathbb{P}$ the uniform probability distribution. Let $f_{1}, \ldots, f_{\gamma}: \mathbb{F}_{q}{ }^{k \lambda} \times \mathbb{F}_{q}{ }^{k \tau} \longmapsto \mathbb{F}_{q}$ be independent random polynomials in $\mathbb{P}_{d}$. We construct the bipartite graph $G$ with $A=\mathbb{F}_{q}{ }^{k \lambda}$ and $B=\mathbb{F}_{q}{ }^{k \tau}$ where two vertices $u, v$ are connected if and only if $f_{i}(u, v)=0 \hspace{0.2cm} \forall i \in\{1,, \gamma\}$. Since $f_{1}, \ldots, f_{\gamma}$ were chosen independently, by \cref{lemma 2.2} the probability a given edge $(u, v)$ is in $G$ is $q^{-\gamma}$. Therefore $|E(G)|: \mathbb{P}_{d}^{\gamma} \longmapsto \mathbb{N}$ is a random variable and if we define $e_{(u, v)}: \mathbb{P}_{d}^{\gamma} \longmapsto\{0,1\}$, where $e_{(u, v)}\left(f_{1}, \ldots, f_{\gamma}\right) :=\left\{\begin{array}{ll}0 & (u, v) \notin E(G) \\ 1 & (u, v) \in E(G)\end{array}\right.$, the expected number of edges is

$$
\begin{gathered}
\mathbb{E}[|E(G)|]=\mathbb{E}\left[\sum_{(u, v) \in \mathbb{F}_{q}{ }^{k \lambda} \times \mathbb{F}_{q}{ }^{k \tau}} e_{(u, v)}\right]= \\
\sum_{(u, v) \in \mathbb{F}_{q}{ }^{k \lambda} \times \mathbb{F}_{q}{ }^{k \tau}} \mathbb{E}\left[e_{(u, v)}\right]=q^{k(\lambda+\tau)-\gamma} \geq q^{k(\lambda+\tau) \frac{k+1}{2 k}}.
\end{gathered}
$$

Suppose now that $w_{1}, w_{2}$ are two fixed vertices in $A, B$ respectively and let $S$ be the set of paths of length $k$ between them. We are going to estimate the $r$-th moment of $|S|$. At this point it is important to note that $|S|^{r}: \mathbb{P}_{d}^{\gamma} \longmapsto \mathbb{N}$, is a random variable that counts the number of ordered collections of $r$ paths of length $k$ in $G$ between $w_{1}, w_{2}$ without any restrictions, possibly overlapping or identical. Since the total number of edges $m$ in any such collection of $r$ paths is at most $d=k r$ and $q$ is sufficiently large prime \cref{lemma 2.2} tells us that the probability that particular collection of paths is in $G$ is $q^{-\gamma m}$, where we again used the fact that $f_{1}, \ldots, f_{\gamma}$ were chosen independently. Within the complete bipartite graph between $A$ and $B$, let $P_{r, m}$ be the number of ordered collections of $r$ paths, each of length $k$, from $w_{1}$ to $w_{2}$ whose union has $m$ edges. So define the random variables $|S|_{m}^{r}: \mathbb{P}_{d}^{\gamma} \longmapsto \mathbb{N}$ that counts exactly that and we have $$|S|^{r}=\sum_{m=1}^{k r}|S|_{m}^{r}.$$ We needed to distinguish the collection of paths by the number of the edges that they contain because the probability of their existence depends on $m$. Then as before, we get

$$
\mathbb{E}\left[|S|^{r}\right]=\sum_{m=1}^{k r} P_{r, m} q^{-\gamma m}.
$$

To finish the estimation we need to consider the size of $P_{r, m}$. Before that, fix some $m \in\{1, \ldots, k r\}$ and some $\left(n_{1}, n_{2}\right) \in\{1, \ldots, r x\} \times\{1, \ldots, r x\}$. Assume that there is a collection of $r$ paths of length $k$ from $w_{1}$ to $w_{2},\left(p_{1}, \ldots, p_{r}\right)$, that are defined from $n_{1}$ inner vertices of $A$ and $n_{2}$ inner vertices of $B$ and are constructed by $m$ edges. We want to show that $k \lambda n_{1}+k \tau n_{2} \leq \gamma m$ or equivalently

$$
(2 x+1) \lambda n_{1}+(2 x+1) \tau n_{2} \leq x(\lambda+\tau) m.
$$

For all $j \in\{1, \ldots, r\}$ let $m_{j}$ be the number of edges that belong to $p_{j} \backslash \bigcup_{i=1}^{j-1} p_{i}$ and similarly $n_{1, j}, n_{2, j}$ the number of inner vertices of A,B that belong to $p_{j} \backslash \bigcup_{i=1}^{j-1} p_{i}$. By definition we have $$\sum_{j=1}^{r} m_{j}=m, \sum_{j=1}^{r} n_{1, j}=n_{1}, \sum_{j=1}^{r} n_{2, j}=n_{2},$$ we will prove the desired inequality by proving that $\forall j \in\{1, \ldots, r\}$

$$
(2 x+1) \lambda n_{1, j}+(2 x+1) \tau n_{2, j} \leq x(\lambda+\tau) m_{j} .
$$

\textbf{Claim}\\
For a given $j \in\{1, \ldots r\}$ at least one of the following is true :
\begin{itemize}
    \item 
$\left\{m_{j} \geq 2 n_{1, j}+1\right.$ and $\left.m_{j} \geq 2 n_{2, j}+1\right\}$,
\item $\left\{m_{j} \geq 2 n_{1, j}+2\right.$ and $\left.m_{j} \geq 2 n_{2, j}\right\}$,
\item $\left\{m_{j} \geq 2 n_{1, j}\right.$ and $\left.m_{j} \geq 2 n_{2, j}+2\right\}$.
\end{itemize}

\begin{proof}[Proof of the claim]
To begin take the case where the new $m_{j}$ edges create a single path. There are 3 possibilities, in the first the path starts and ends in different parts of $G$ and we get the first set of inequalities. The second possibility is when the path starts at $A$ and finishes at $A$ and the
third possibility is when the path starts at $B$ and finishes at $B$, it is easy to see that in those we get the second and third set of inequalities respectively. Now consider the general case where the new $m_{j}$ edges create multiple disjoint paths. Again, it is fairly obvious that if we examine one of the paths and find that satisfies one of the sets of inequalities from the above analysis, these set remains true even after all paths are examined.

Note that $x \geq n_{1, j}, n_{2, j} \forall j \in\{1, \ldots, r\}$, hence in the first instance we have

$$
\begin{aligned}
x(\lambda+\tau) m_{j} = & x \lambda m_{j}+x \tau m_{j} \geq x \lambda\left(2 n_{1, j}+1\right)+x \tau\left(2 n_{2, j}+1\right) \\
& \geq(2 x+1) \lambda n_{1, j}+(2 x+1) \tau n_{2, j} .
\end{aligned}
$$

In the second instance, because $\lambda>\tau$ we have

$$
\begin{aligned}
x(\lambda+\tau) m_{j} & =x \lambda m_{j}+x \tau m_{j} \geq x \lambda\left(2 n_{1, j}+2\right)+x \tau 2 n_{2, j} \\
& \geq(2 x+1) \lambda n_{1, j}+(2 x+1) \tau n_{2, j} .
\end{aligned}
$$

In the final instance we have

$$
\begin{gathered}
x(\lambda+\tau) m_{j}=x \lambda m_{j}+x \tau m_{j} \geq x \lambda 2 n_{1, j}+x \tau\left(2 n_{2, j}+2\right) \\
=2 x \lambda n_{1, j}+(2 x+1) \tau n_{2, j}+\left(2 x-n_{2, j}\right) \tau .
\end{gathered}
$$

Before we continue we need to consider the two possible subcases. The first is that the new $m_{j}$ edges create a single path that starts at $B$ and finishes at $B$ and the second is when the new $m_{j}$ edges create multiple disjoint paths. In the second subcase if one of the paths starts and ends in different parts of $G$ or starts and ends at $A$ we can go to one of the first two instances and we are done. If they create multiple paths that start and end in $B$ then in the above string of inequalities we can substitute 2 by 4 and get $$x(\lambda+\tau) m_{j} \geq 2 x \lambda n_{1, j}+(2 x+1) \tau n_{2, j}+3 x \tau,$$ but $3 \tau \geq \lambda$, therefore again we get what we want. Finally we need only to consider the first subcase. So now we have $n_{1, j}=\frac{m_{j}}{2}, n_{2, j}+1=\frac{m_{j}}{2}$ and want to prove that $\left(2 x-n_{2, j}\right) \tau \geq \lambda n_{1, j}$, equivalently

$$
a \geq \frac{n_{1, j}}{2 x-n_{2, j}}=\frac{\frac{m_{j}}{2}}{2 x+1-\frac{m_{j}}{2}}=\frac{1}{\frac{2 x+1}{\frac{m_{j}}{2}}-1}.
$$

But for all $j$ we have $m_{j} \leq 2 x$, so

$$
\frac{1}{\frac{2 x+1}{\frac{m_{j}}{2}}-1} \leq \frac{1}{\frac{2 x+1}{x}-1}=\frac{x}{x+1}=\frac{k-1}{k+1}.
$$
\end{proof}
Denote with $\Gamma_{m}$ the pairs $\left(n_{1}, n_{2}\right) \in\{1, \ldots, r x\} \times\{1, \ldots, r x\}$ that there exists a collections
of $r$ paths of length $k$ from $w_{1}$ to $w_{2},\left(p_{1}, \ldots, p_{r}\right)$, that are defined from $n_{1}$ inner vertices of $A$ and $n_{2}$ inner vertices of $B$ and are constructed by $m$ edges. For every such pair $\left(n_{1}, n_{2}\right)$ there are at most $q^{k \lambda n_{1}+k \tau n_{2}}=q^{(2 x+1) \lambda n_{1}+(2 x+1) \tau n_{2}}$ such collections, so

$$
P_{r, m} \leq \sum_{\left(n_{1}, n_{2}\right) \in \Gamma_{m}} q^{(2 x+1) \lambda n_{1}+(2 x+1) \tau n_{2}} .
$$

Note that $\left|\Gamma_{m}\right| \leq x^{2} r^{2}$ for every $m \in\{1, \ldots, r k\}$. Now we return to the expectation of $|S|^{r}$ and get

$$
\begin{aligned}
\mathbb{E}\left[|S|^{r}\right]= & \sum_{m=1}^{k r} P_{r, m} q^{-\gamma m} \leq \sum_{m=1}^{k r} \sum_{\left(n_{1}, n_{2}\right) \in \Gamma_{m}} q^{(2 x+1) \lambda n_{1}+(2 x+1) \tau n_{2}-x(\lambda+\tau) m} \\
& \leq \sum_{m=1}^{k r}\left|\Gamma_{m}\right| \leq \sum_{m=1}^{k r} x^{2} r^{2}=k x^{2} r^{3}=k\left(\frac{k-1}{2}\right)^{2} r^{3} .
\end{aligned}
$$

By Markov's inequality we conclude that $\mathbb{P}[|S| \geq s]=\mathbb{P}\left[|S|^{r} \geq s^{r}\right] \leq k\left(\frac{k-1}{2}\right)^{2} \frac{r^{3}}{s^{r}}$. We need to note that the set of paths $S$ is a subset of
\begin{gather*}
T := \left\{\left(x_{1}, \ldots, x_{k-1}\right): x_{2 i-1} \in \mathbb{F}_{q}{ }^{k \tau}, x_{2 i} \in\right.\mathbb{F}_{q}{ }^{k \lambda} \hspace{0.2cm} \forall i \in\{1, \ldots, x\}\hspace{0.2cm} \text{and} \\ \left.f_{j}\left(w_{1}, x_{1}\right)=f_{j}\left(x_{1}, x_{2}\right)=\ldots=f_{j}\left(x_{k-1}, w_{2}\right)=0, \hspace{0.2cm} \forall j \in\{1, \ldots, \gamma\}\right\}.
\end{gather*}
Unfortunately $T$ may contain degenerate walks as well as paths we are interested in, so we proceed to the following analysis. If $T$ contains a degenerate walk $w_{1}, x_{1}, \ldots, x_{k-1}, w_{2}$ it must be one of the following cases, either $w_{1}=x_{b}$ for some $1 \leq b \leq k-1$ or $x_{a}=x_{b}$ for some $1 \leq a<b \leq k-1$ or $x_{a}=w_{2}$ for some $1 \leq a \leq k-1$. Let us then define the sets :

\begin{itemize}
  \item $T_{0 b}=T \cap\left\{\left(x_{1}, \ldots, x_{k-1}\right): w_{1}=x_{b}\right\}$ for some $1 \leq b \leq k-1$,

  \item $T_{a b}=T \cap\left\{\left(x_{1}, \ldots, x_{k-1}\right): x_{a}=x_{b}\right\}$ for some $1 \leq a<b \leq k-1$,

  \item $T_{a k}=T \cap\left\{\left(x_{1}, \ldots, x_{k-1}\right): x_{a}=w_{2}\right\}$ for some $1 \leq a \leq k-1$.

\end{itemize}

Since $T$ is defined over $\mathbb{F}_{q}$ and has complexity bounded in terms of $k$, \cref{lemma 2.6} tells us that there are $O_{k}(1)$ absolutely irreducible varieties $Y_{1}, \ldots, Y_{s}$, each of which is defined over $\mathbb{F}_{q}$ and has complexity $O_{k}(1)$, such that $\bigcup_{i=1}^{s} Y_{i}\left(\mathbb{F}_{q}\right)=T\left(\mathbb{F}_{q}\right)$. If $\operatorname{dim} Y_{i} \geq 1$, \cref{lemma 2.5} tells us that either there exist $a$ and $b$ such that $Y_{i} \subseteq T_{a b}$ or the dimension of $Y_{i} \cap T_{a b}$ is smaller than the dimension of $Y_{i}$ for all $a$ and $b$. If $Y_{i} \subseteq T_{a b}$ for some $a$ and $b$, the component does not contain any non-degenerate paths and may be removed from consideration. If instead the dimension of $Y_{i} \cap T_{a b}$ is smaller than the dimension of $Y_{i}$ for all $a$ and $b$, the Lang-Weil bound, \cref{lemma 2.4}, tells us that for $q$ sufficiently large

$$
|S| \geq\left|Y_{i}\left(\mathbb{F}_{q}\right)\right|-\sum_{a, b}\left|Y_{i} \cap T_{a b}\right| \geq q^{\operatorname{dim} Y_{i}}-O_{k}\left(q^{\operatorname{dim} Y_{i}-\frac{1}{2}}\right)-O_{k}\left(q^{\operatorname{dim} Y_{i}-1}\right) \geq \frac{q}{2}.
$$

On the other hand, if $\operatorname{dim} Y_{i}=0$ for every $Y_{i}$ which is not contained in some $T_{a b}$, \cref{lemma 2.4} tells us that $|S| \leq \sum\left|Y_{i}\right|=O_{k}(1)$, where the sum is taken over all $i$ for which $\operatorname{dim} Y_{i}=0$. Putting everything together, we see that that there exists a constant $c_{k}-1$, depending only on $k$, such that either $|S| \leq c_{k}-1$ or $|S| \geq q / 2$. Therefore, by the consequence of Markov's inequality noted earlier we get

$$
\mathbb{P}\left[|S|>c_{k}-1\right]=\mathbb{P}\left[|S| \geq \frac{q}{2}\right] \leq k\left(\frac{k-1}{2}\right)^{2} \frac{r^{3}}{\left(\frac{q}{2}\right)^{r}}.
$$

Define a pair of vertices $\left(w_{1}, w_{2}\right)$ bad if $w_{1} \in A, w_{2} \in B$ and there are more than $c_{k}-1$ paths of length $k$ between them. Let $\Lambda: \mathbb{P}_{d}^{\gamma} \longmapsto \mathbb{N}$ be the random variable counting the number of bad pairs. So

$$
\mathbb{E}[\Lambda] \leq \sum_{\left(w_{1}, w_{2}\right) \in A \times B} \mathbb{P}\left[|S|>c_{k}-1\right]=q^{k(\lambda+\tau)} k\left(\frac{k-1}{2}\right)^{2} \frac{r^{3}}{\left(\frac{q}{2}\right)^{r}}.
$$

Now remove a vertex from $B$ for every bad pair. Since each vertex has degree at most $n$ the total number of edges removed is at most $\Lambda n$. From all the above the expected number of edges for the final graph $G^{\prime}$ would be

$$
\mathbb{E}\left[\left|E\left(G^{\prime}\right)\right|\right] \geq q^{k(\lambda+\tau) \frac{k+1}{2 k}}-q^{k(\lambda+\tau)+k \lambda-r} k\left(\frac{k-1}{2}\right)^{2} \frac{r^{3}}{\left(\frac{1}{2}\right)^{r}}.
$$

Hence, if we define $$r :=\frac{3 k-1}{2} \lambda+\frac{k-1}{2} \tau+1,$$ there are choices of $f_{1}, \ldots, f_{\gamma}$ with the desired properties. As stated, this result only holds when $q$ is a prime and $n=q^{k l}$. To generalize this let $n$ be a sufficiently large natural number, then by Bertrand's postulate we have that exists a prime $p_{n}$ such that

$$
\begin{gathered}
\left\lfloor\frac{n^{\frac{1}{k l}}}{2}\right\rfloor<p_{n}<2\left\lfloor\frac{n^{\frac{1}{k l}}}{2}\right\rfloor \Rightarrow \frac{n^{\frac{1}{k l}}}{4}<p_{n}<n^{\frac{1}{k l}} \Rightarrow \\
\frac{1}{4^{k l}} n<p_{n}^{k l}<n .
\end{gathered}
$$

Hence, by applying the previous result to $p_{n}$ and adding to part $A$ the remaining vertices until we reach $|A|=n$ we complete the proof. 
\end{proof}

From the techniques of the proof it is obvious that a somewhat stronger result was proved because the $c_{k}$ paths need not to be internally disjoint. But that is counterbalanced by the fact that the value of $c_{k}$ becomes undesirable big in correlation with $k$.

\section{Conclusion}
A direct consequence from our construction is that $\operatorname{ex}\left(n^{a}, n, \theta_{k, c_{k}}\right)=\Omega_{k, a}\big(\left(n^{1+a}\right)^{\frac{k+1}{2 k}}\big)$. Combining this with \citet{jiang_ma_yepremyan_2022}[Theorem 1.11] which states that $\operatorname{ex}\left(n^{a}, n, \theta_{k, c_{k}}\right) = O_{k, a}\big(\left(n^{1+a}\right)^{\frac{k+1}{2 k}}+$ $n+n^{a}\big)$  we conclude to our final result.

\begin{theorem}
Let $k \geq 3$ be an odd number and $a \in \mathbb{Q}$, where $\frac{k-1}{k+1} \leq a<1$. Then, there exists a natural number $c_{k}$ such that for every $n \in \mathbb{N}$ sufficiently large we have
\begin{align}
\operatorname{ex}\left(n^{a}, n, \theta_{k, c_{k}}\right)=\Theta_{k, a}\big(\left(n^{1+a}\right)^{\frac{k+1}{2 k}}\big).
\end{align}
\end{theorem}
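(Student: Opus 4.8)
The plan is to combine the two halves that are already available. The construction theorem (Theorem 3.1) gives, for every odd $k\geq 3$ and every rational $a\in[\frac{k-1}{k+1},1)$, a $\theta_{k,c_k}$-free bipartite graph with parts of size $n$ and $O_k(n^a)$ and with $\Omega_{k,a}\big((n^{1+a})^{\frac{k+1}{2k}}\big)$ edges; by the very definition of the asymmetric bipartite Tur\'an number this yields the lower bound
\begin{align*}
\operatorname{ex}\big(n^a,n,\theta_{k,c_k}\big)=\Omega_{k,a}\big((n^{1+a})^{\frac{k+1}{2k}}\big).
\end{align*}
For the matching upper bound I would quote \citet{jiang_ma_yepremyan_2022}[Theorem 1.11], which states $\operatorname{ex}(n^a,n,\theta_{k,c_k})=O_{k,a}\big((n^{1+a})^{\frac{k+1}{2k}}+n+n^a\big)$.

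The only genuine (though minor) step is to check that the extra terms $n+n^a$ in the upper bound are absorbed by the main term $(n^{1+a})^{\frac{k+1}{2k}}$ for the relevant range of $a$. Since $a<1$ we have $n^a\leq n$, so it suffices to compare $n$ with $(n^{1+a})^{\frac{k+1}{2k}}=n^{(1+a)\frac{k+1}{2k}}$. Thus the main term dominates precisely when $(1+a)\frac{k+1}{2k}\geq 1$, i.e.\ $1+a\geq\frac{2k}{k+1}$, i.e.\ $a\geq\frac{k-1}{k+1}$, which is exactly the hypothesis of the theorem. Hence for $a\in[\frac{k-1}{k+1},1)$ the upper bound simplifies to $O_{k,a}\big((n^{1+a})^{\frac{k+1}{2k}}\big)$.

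Putting the two bounds together gives $\operatorname{ex}(n^a,n,\theta_{k,c_k})=\Theta_{k,a}\big((n^{1+a})^{\frac{k+1}{2k}}\big)$ for all sufficiently large $n$, which is the claim. I do not expect any real obstacle here: the construction theorem does all the work, and the remaining argument is just the elementary exponent comparison above together with citing the known upper bound. One should only be slightly careful that the $c_k$ appearing in the two results can be taken to be the same constant (or that one may pass to the larger of the two, since $\theta_{k,\ell}$-freeness for a smaller $\ell$ implies it for a larger one only in one direction — concretely, one fixes $c_k$ as in Theorem 3.1 and invokes the upper bound with that same value), so that the lower and upper bounds refer to the same family of forbidden subgraphs.
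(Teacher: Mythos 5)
Your proposal is correct and follows essentially the same route as the paper: the lower bound from the construction theorem plus the upper bound of \citet{jiang_ma_yepremyan_2022}[Theorem 1.11]. Your explicit check that $n+n^{a}$ is absorbed by $(n^{1+a})^{\frac{k+1}{2k}}$ exactly when $a\geq\frac{k-1}{k+1}$ is a detail the paper leaves implicit, and is worth keeping.
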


\section*{Acknowledgements}
I would like to sincerely thank Dr. Hong Liu for his valuable comments in the process of writing this paper.

%\bibstyle:{plain}
%\bibliography{refer}

\begin{thebibliography}{6}
\providecommand{\natexlab}[1]{#1}
\providecommand{\url}[1]{\texttt{#1}}
\expandafter\ifx\csname urlstyle\endcsname\relax
  \providecommand{\doi}[1]{doi: #1}\else
  \providecommand{\doi}{doi: \begingroup \urlstyle{rm}\Url}\fi

\bibitem[Bukh(2014)]{article}
Boris Bukh.
\newblock Random algebraic construction of extremal graphs.
\newblock \emph{Bulletin of the London Mathematical Society}, 47, 09 2014.
\newblock \doi{10.1112/blms/bdv062}.

\bibitem[Bump(1998)]{bump1998algebraic}
Daniel Bump.
\newblock \emph{Algebraic geometry}.
\newblock World Scientific Publishing Company, 1998.

\bibitem[Conlon(2019)]{conlon2019graphs}
David Conlon.
\newblock Graphs with few paths of prescribed length between any two vertices.
\newblock \emph{Bulletin of the London Mathematical Society}, 51\penalty0 (6):\penalty0 1015--1021, 2019.

\bibitem[Faudree and Simonovits(1983)]{faudree1983class}
Ralph~J Faudree and Mikl{\'o}s Simonovits.
\newblock On a class of degenerate extremal graph problems.
\newblock \emph{Combinatorica}, 3:\penalty0 83--93, 1983.

\bibitem[Jiang et~al.(2022)Jiang, Ma, and Yepremyan]{jiang_ma_yepremyan_2022}
Tao Jiang, Jie Ma, and Liana Yepremyan.
\newblock On turán exponents of bipartite graphs.
\newblock \emph{Combinatorics, Probability and Computing}, 31\penalty0 (2):\penalty0 333–344, 2022.
\newblock \doi{10.1017/S0963548321000341}.

\bibitem[Lang and Weil(1954)]{lang1954number}
Serge Lang and Andr{\'e} Weil.
\newblock Number of points of varieties in finite fields.
\newblock \emph{American Journal of Mathematics}, 76\penalty0 (4):\penalty0 819--827, 1954.

\end{thebibliography}

\end{document}